\let\uppercasenonmath\@gobble
\newcommand{\disp}{\displaystyle}
\newcommand{\ba}{\begin{array}}
\newcommand{\ea}{\end{array}}
\newcommand{\bea}{\begin{eqnarray}}
\newcommand{\eea}{\end{eqnarray}}
\newcommand{\pr}{\mbox{pr}}
\newcommand{\diam}{\mbox{diam}}
\newcommand{\ra}{\rightarrow}
\newcommand{\R}{\mathbb{R}}
\newcommand{\Z}{\mathbb{Z}}
\renewcommand{\phi}{\varphi}
\theoremstyle{plain}
\newtheorem{theorem}{Theorem}
\newtheorem{lemma}[theorem]{Lemma}
\newtheorem{coro}[theorem]{Corollary}
\newtheorem{prop}[theorem]{Proposition}
\theoremstyle{remark}
\theoremstyle{definition}
\begin{document}

\title{Sequential gradient dynamics in real analytic Morse systems}

\author{H. I\c{s}{\i}l Bozma}
\address{Bo\u{g}az\.{\i}\c{c}\.{\i} \"{U}n\.{\i}vers\.{\i}tes\.{\i}, Department of Electrical and Electronics Engineering, \.Istanbul, Turkey}
\email{bozma@boun.edu.tr}

\author{Fer\.{\i}t \"{O}zt\"{u}rk}
\address{Bo\u{g}az\.{\i}\c{c}\.{\i} \"{U}n\.{\i}vers\.{\i}tes\.{\i}, Department of Mathematics, \.Istanbul, Turkey}
\email{ferit.ozturk@boun.edu.tr}

\subjclass[2010]{Primary 34C07; Secondary 58A07, 37C10}

\begin{abstract}
Let $\Omega\subset\R^{M}$ be a compact connected $M$-dimensional real analytic  domain with boundary and $\phi$ be a primal navigation function; i.e.  
a real analytic  Morse function on $\Omega$ with a unique minimum 
and  with minus gradient vector field $G$ of  $\phi$ on ${\partial \Omega}$ 
pointed inwards along each coordinate.
Related to a robotics problem, we define a sequential hybrid process on $\Omega$ for  $G$ starting
from any initial point $q_0$ in the interior  of $\Omega$ as follows: at each step we restrict ourselves to an affine subspace where a collection of coordinates are fixed and  allow the other coordinates change along an integral curve of the projection of $G$ onto the subspace.  We prove that provided each coordinate appears infinitely many times  in the coordinate choices during the process, the process converges to a critical point of $\phi$. That critical point is the unique minimum for a dense subset of primal navigation
functions. We also present an upper bound for the total length of the trajectories close to a critical point. 
 \end{abstract}
\keywords{Real analytic gradient flows, Hybrid navigation, Morse systems, Descent methods, Lojasiewicz gradient inequality}

\maketitle

\section{Introduction}

We consider the $d$-dimensional closed unit  ball $D\subset \R^d$ ($d\geq 1$) and $N$ objects (point particles or small disks with various radii) that we call {\it robots}. Each robot has an
initial position and a target position in the interior of $D$. A problem stated in e.g.  \cite{far}, \cite{fco}, \cite{boz}, \cite{lat}, is to navigate the robots from their initial to the target positions without overlapping each other nor the boundary of $D$.
The configuration space of nonoverlapping positions of the robots considered as point particles is a noncompact  manifold in $\R^{dN}$,  and is connected except for $d=1, N>1$ \cite{bozkodrob2001}. Allowing nonzero radii for robots, the configuration space of nonoverlapping positions (touching allowed) is a compact manifold-with-boundary $\Omega$ in $D^N\subset \R^{dN}$. Connectedness depends on the relative values of the radii; in particular if the radii of the robots are sufficiently small then the configuration space is connected (see e.g. \cite{bar}). In the sequel we assume that $\Omega$ is connected.  

The initial and target positions determine two points, say $q_0$ and $p$ respectively, in the interior of 
$\Omega$. Let $\tilde{\Omega}$ be a small, open neighborhood of $\Omega$.
Suppose there is a $C^2$  function $\phi:\tilde{\Omega}\ra\R$ such that \\

(i) $\phi$ is Morse on $\Omega$; 

(ii) $-\nabla \phi$ is inwards on ${\partial \Omega}$;

(iii) $\phi$ has a unique minimum at $p$; there $\phi(p)=0$. \\

Note that the condition {that the  domain of $\phi$ contains $\Omega \subset \tilde{\Omega}$} properly guarantees 
that the derivative of $\phi$ on $\partial \Omega$ is defined. 
Now it is well known through Morse theory that the integral curve $\gamma_{q_0}$ of the vector field $-\nabla\phi$ through $q_0\in\Omega$ converges to a point $q\in\Omega$ and provided that $q_0$ is not chosen in some set of measure zero, we have $q=p$ (see e.g. \cite{mil}). More precisely, the measure zero set is the complement of the stable manifold $W^s_p$ of $p$ in $\Omega$.
This solves the navigation problem of the $N$ robots; they move in $D$ simultaneously along trajectories determined by $\gamma_{q_0}$ and each converges to its target point provided that $q_0\in W^s_p$.
In \cite{boz}, such a Morse function has been constructed using the basic idea that each robot will repel each other and  will be attracted by its target point. {Furthermore the function has been constructed real analytic}. 

In the present article, we ask a related question: suppose that the robots are allowed to move one at a time cyclically rather than simultaneously. Can the same function $\phi$ and its minus gradient still  be used  to solve this navigation problem? Let us make precise what we mean.
  
Let $\phi$ be a real analytic function satisfying (i)-(iii) above on a compact connected real analytic domain with boundary
(i.e. analytically diffeomorphic to a closed ball)  $\Omega$  in $\R^{dN}$ 
and $G= -\nabla\phi$. By definition the vector field $G$ is  real analytic.
We will denote the points of $\R^{dN}$ by 
$x=(x_1,\ldots,x_N)=((y_1,\ldots,y_d),\ldots,(y_{dN-d+1},\ldots,y_{dN}))$ with $x_m=(y_{dm-d+1},\ldots,y_{dm})\in\R^d$. (Now and below,  $x_m$ is a $d$-tuple and $y_j\in\R$; $m$  always denotes an index in $1,\ldots,N$ and $j$  an index in $J=\{1,\ldots,dN\}$.) 

Let $\mathbf{n}$ be a subset of $J$. 
For a point $a\in\Omega$, we let
$$R^{\mathbf{n}}_a\doteq\{(y_1,\ldots,y_{dN})|y_{t}=a_t \mbox{ for all } t\in J-\mathbf{n}\}\cap\Omega$$
and call it  the {\it slice of type $\mathbf{n}$}  through $a$.
To keep the notation simple we will denote the particular slice through $a$ of type $\mathbf{n}=\{dm-d+1,\ldots, dm\}$ by $R^m_a$ and we will call it a slice of type $m$.

Let us consider an initial point $q_0\in\mbox{int}(\Omega)$ and the vector field $G^1_{q_0}$ over $R^1_{q_0}$
defined for each $x\in R^1_{q_0}$ by $G^1_{q_0}(x)=\pr_1(G(x))=-\nabla(\phi|_{R^1_{q_0}})$
Here  $\pr_i$ is the projection map onto the $i$-th $d$-tuple of coordinates. Observe that  $G^1_{q_0}$ is a real analytic gradient vector field. 
In the sequel we will drop the subscript and write $G^1$ in case no confusion arises.
Now, \L ojasiewicz's theorem  states that any integral curve
of the gradient flow of a real analytic function that has a nonempty $\omega$-limit set has a unique
limit \cite{loj}. Thus in our case  if the orbit of the vector field
$G^1_{q_0}$ through $q_0$ stays in $R^1_{q_0}$, it converges to a unique point, say $q_1\in R^1_{q_0}$, which is a singular point of the vector field $G^1_{q_0}$. 
Let us denote the closure of that orbit by $\gamma_1$, which starts from $q_0$ and ends at $q_1$.

At this point, we will have an extra condition to guarantee that the orbits stay in $\Omega$.
For each $a\in\partial\Omega$ let $f_a\in C^{\omega}(\R^{dN},\R)$ be a defining analytic function for $\partial \Omega$ in an open subset $U_a$ containing $a$. Suppose that $\nabla f_a$ is outward.   Then we impose the following condition which is
stronger than condition (ii) above: \\

(ii)$'$ For every $a\in\partial \Omega$ and $x\in U_a\cap\partial \Omega$, the angle
between $\nabla\phi(x)$ and $\nabla f_a(x)$ is sufficiently small so that for every
$j\in J$, the $j$-th component of $\nabla\phi(x)$ and $\nabla f_a(x)$ have the same sign (or are zero). \\

This condition is equivalent to having the projected vector field 
$-\nabla(\phi|_{R^{\mathbf{n}}_a})$ inwards (or zero) at the boundary of every slice 
$R^{\mathbf{n}}_a$, for each $a\in \Omega$ and $\mathbf{n}\subset J$.
Observe that  (ii)$'$ is satisfied if, for example, the angle between $\nabla \phi$ and 
$\nabla f_a$ is identically zero on $\partial \Omega$. If, furthermore, $\partial \Omega$
is given as a level set of a unique real analytic function,  this situation is
nothing but the admissibility of a  navigation function referred in e.g. \cite{ko1}, \cite{ko2}, \cite{boz}.

Now recursively, at the $k$-th step, let $m_k=(k \mod N)+1$. We consider the orbit through the point $q_{k-1}$ of the vector field $G^{m_k}_{q_{k-1}}$
over the slice $R_{q_{k-1}}^{m_k}$. Again by Lojasiewicz's theorem and thanks to the condition (ii)$'$, the orbit converges to a unique singular point, say, $q_k$. We denote the closure of that orbit by $\gamma_k$. Note that $R_{q_{k}}^{m_k}=R_{q_{k-1}}^{m_k}$. We call $q_k$'s the stationary points of the process. We will say that 
the point $q_k$ is of type $m_k$; i.e. type-$m$ points are the limits of integral curves on type-$m$ slices. This  process producing the sequence $(q_k)_{k=0}^\infty$ of points will be called
a {\em sequential gradient dynamical process} associated to the slices $R^m_x$.
Note that there are infinitely many points of the sequence $(q_k)_{k=0}^\infty$ of type $m$ for each $m\in\{1,2,\ldots,N\}$. 

We deal with the following question: For what values of $q_0$ does the sequence $(q_k)$ converge to the unique minimum $p$? For other values  of $q_0$ does  $(q_k)$ converge?
Our purpose in this article is to prove the following theorem that answers these two questions.
To state that, we will need a definition: we call a real analytic  function satisfying the conditions (i), (ii)$'$, (iii) above a {\it  primal navigation function}.

\begin{theorem}
\label{ana}
Let $\Omega\subset \R^{dN}$ be a compact connected real analytic domain with boundary;
$\phi:\Omega\ra \R$ 
be a  primal  navigation function with the unique minimum at $p\in\mbox{int}(\Omega)$ and 
$q_0\in\mbox{int}(\Omega)$ be an arbitrary initial point. Then the sequence $(q_k)_{k=0}^\infty$ 
of the sequential gradient dynamical process converges to a critical point of $\phi$.

Moreover, there is a primal  navigation function $\psi$ on $\Omega$ with the same minimum $p$
such that $\psi$ is arbitrarily $C^{\omega}$-close to $\phi$, 
and that the process for $\psi$ converges to $p$ for all initial points $q_0$ except when $q_0$ is a saddle point.

In any case of convergence above, it is possible to have  $(q_k)$ ultimately constant or there may be infinitely many nontrivial steps of the process.
\end{theorem}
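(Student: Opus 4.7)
The proof has three pieces of varying difficulty; only convergence to a critical point requires substantial work, and my strategy combines monotone descent, the \L ojasiewicz gradient inequality, and the Morse hypothesis to collapse the non-empty accumulation set of $(q_k)$ to a single critical point.

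Since $\phi$ strictly decreases along each arc $\gamma_k$, the sequence $\phi(q_k)$ is non-increasing and bounded below, so it converges to some $c\ge\phi(p)$; compactness of $\Omega$ then gives a non-empty accumulation set $\Lambda\subset\phi^{-1}(c)$. The crucial step is to show that every $\bar q\in\Lambda$ is a critical point of $\phi$. Fix such a $\bar q$ and a subsequence $q_{k_i}\to\bar q$. The stationary condition $\nabla_{m_k}\phi(q_k)=0$ gives only one block-gradient; to get all $N$ blocks I would use the $N$-periodic pattern $m_k=(k\bmod N)+1$ and examine the consecutive iterates $q_{k_i},\dots,q_{k_i+N-1}$, which hit every block type. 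The \L ojasiewicz length estimate on the slice
\[
\text{length}(\gamma_k)\le \frac{C}{1-\theta}\bigl(\phi(q_{k-1})-\phi(q_k)\bigr)^{1-\theta},
\]
with $(C,\theta)$ uniform in a neighbourhood of $\bar q$ across all nearby slices (a compactness argument exploiting real analyticity of $\phi$ and the analytic dependence of the slices on the base point), together with $\phi(q_{k-1})-\phi(q_k)\to 0$, forces $\text{length}(\gamma_{k_i+j})\to 0$ for $j=1,\dots,N-1$. Hence all $N$ shifted iterates converge to $\bar q$, and continuity of $\nabla\phi$ lifts each $\nabla_{m_{k_i+j}}\phi(q_{k_i+j})=0$ to $\nabla_m\phi(\bar q)=0$ for every $m$.

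The Morse hypothesis now says the critical points of $\phi$ are isolated, so $\Lambda$ is finite; on the other hand, the concatenation $\bigcup_k\gamma_k$ is continuous in $\Omega$ with arc lengths ultimately shrinking to zero (by the same slice estimate), and a standard $\omega$-limit argument then gives $\Lambda$ connected. A finite connected set in $\Omega$ is a single point, so $(q_k)$ converges to a unique critical point $\bar q$. The upper bound on total trajectory length near $\bar q$ promised in the abstract follows by summing the slice estimates, which telescope to a constant multiple of $(\phi(q_0)-c)^{1-\theta}$ once the iterates enter the uniform \L ojasiewicz neighbourhood.

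For the density claim I would perturb $\phi$ in $C^\omega$, following the Koditschek--Rimon-type constructions of \cite{ko1},\cite{boz}, into a primal navigation function $\psi$ with the same minimum $p$ whose non-minimum critical points are non-degenerate saddles in general position relative to every coordinate slicing. For such $\psi$ the sequential process can land at a non-minimum critical point $q^*$ only if $q_0$ itself lies on a stable manifold of $q^*$ of positive codimension, forcing $q_0$ to be a saddle in the only non-exceptional case. The final dichotomy is immediate: if $q_0$ is any critical point of $\phi$, every block-flow fixes it and $(q_k)$ is constant, while a generic starting point --- e.g.\ $d=1$, $N=2$, $\phi(y_1,y_2)=y_1^2+y_2^2+\alpha y_1y_2$ with $|\alpha|\in(0,2)$ and $q_0$ off the coordinate axes --- yields infinitely many nontrivial steps. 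The main obstacle throughout is the uniform-slice \L ojasiewicz step: producing a single pair $(C,\theta)$ that works across the continuously varying family of slices through a neighbourhood of $\bar q$, which is what reconciles the block-restricted dynamics with \L ojasiewicz's classical length-finiteness theorem.
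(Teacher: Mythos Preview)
Your convergence argument has a circularity exactly at the step you yourself flag as the main obstacle. The slice length estimate
\[
\mathrm{length}(\gamma_k)\le \tfrac{C}{1-\theta}\bigl(\phi(q_{k-1})-\phi(q_k)\bigr)^{1-\theta}
\]
is a \L ojasiewicz bound centred at the critical point $q_k$ of $\phi|_{R^{m_k}_{q_{k-1}}}$: it holds only inside a \L ojasiewicz neighbourhood of $q_k$ on that slice, with constants depending on $q_k$. To make $(C,\theta)$ uniform near $\bar q$ you would need to know in advance either that the $q_{k_i+j}$ already cluster near $\bar q$, or that $\bar q$ is itself a critical point of every slice function $\phi|_{R^m_{\bar q}}$; both are exactly what you are trying to deduce. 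A bare compactness appeal cannot manufacture uniform \L ojasiewicz data across a family of slices whose relevant critical points have not been located (and can bifurcate as the base point varies). The paper closes this gap by a genuinely different device: assuming for contradiction that some ball $U$ about the accumulation point contains no sequence points of a given type, it shows that the escaping arcs of that type accumulate on a compact set $\gamma\subset R^m_{\bar q}$ consisting \emph{entirely of critical points} of $\phi|_{R^m_{\bar q}}$ (this is forced by $\phi(q_{k-1})-\phi(q_k)\to 0$), and only \emph{then} invokes \L ojasiewicz, at the points of $\gamma$, extending to nearby slices by continuity. The resulting arc-length bound contradicts the arcs having to traverse $U$. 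A side benefit is that the paper's uniqueness-of-limit conclusion does not use the Morse hypothesis at all, whereas your isolated-critical-points-plus-connected-$\Lambda$ route does.

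Your density paragraph also has a gap. For the continuous flow $-\nabla\psi$ the basin of a saddle is its stable manifold, of positive codimension; for the \emph{sequential} process this can fail---the paper exhibits a saddle whose sequential basin has positive area. The mechanism is that a coordinate slice through a point of the local stable manifold can contain an integral curve of the projected field lying inside that stable manifold. ``General position relative to every coordinate slicing'' is the right intuition but not yet a proof: what is needed is that the local stable manifold of each saddle is transverse to every slice $R^m_x$ through nearby points, and the paper achieves this with an explicit global real-analytic radial diffeomorphism $h(x)=x+bx\,\sin(ar^2)/r^2$ fixing $p$. Your clause ``forcing $q_0$ to be a saddle in the only non-exceptional case'' does not follow from $q_0$ lying on a positive-codimension set and conflates the continuous and sequential dynamics.
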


{The proofs of the first two claims are given in }Section~\ref{pruf} as
Lemma~\ref{bir}, Corollary~\ref{iki} and Lemma~\ref{uc}. 
They depend heavily on $\phi$'s being real  analytic and Morse.
The main tool in the proofs is the \L ojasiewicz inequality \cite{loj0}: 
For $q$  a critical point of  the real analytic function $\phi$ and  $\phi(q)=0$,  there are  a neighborhood $U$ of $q$, $c>0$ and $\mu\in[0,1)$ satisfying for all $x\in U$, 
\begin{equation}
|\nabla\phi(x)|> c|\phi(x)|^{\mu}.
\label{oja}
\end{equation}
We will call the triple $(U,c,\mu)$ a \L ojasiewicz neighborhood. Being Morse  will be used only in Lemma~\ref{uc}. 
The examples for the last claim of the theorem are presented in Section~\ref{ornek}.

The proof of Theorem~\ref{ana} is valid for a more general claim regarding a more general
set-up for the process. 
Consider a sequence $(\mathbf{n}_i)_{i=1}^{\infty}$ of subsets of $\{1,\ldots,M\}$ ($M\in\Z^+$) and  the  slices  $R^{\mathbf{n}_i}_a\subset \R^{M}$ through $a\in\Omega$ of type 
$\mathbf{n}_i$
We define the process associated to the sequence $(\mathbf{n}_i)$ as follows: at the $i$-th step the dynamics occurs in the slice $R^{\mathbf{n}_i}_{q_{i-1}}$ and converges to the point $q_i$. 
Then we have the following
\begin{theorem} 
$\phi:\Omega\ra \R$, $p$, $q_0$ and $(\mathbf{n}_i)_{i=1}^{\infty}$ be as above. 
Suppose each index $j$ ($1\leq j\leq M$)  appears infinitely many times  as elements of the sets of the sequence $(\mathbf{n}_i)$. Then the sequence $(q_k)_{k=0}^\infty$ 
of the sequential gradient dynamics associated to  the sequence $(\mathbf{n}_i)$ 
converges to a critical point of $\phi$. 

Moreover, there is a primal  navigation function $\psi$ on $\Omega$ with the same minimum $p$
such that $\psi$ is arbitrarily $C^{\omega}$-close to $\phi$, 
and that the process for $\psi$ converges to $p$ for all initial points $q_0$ except when $q_0$ is a saddle point.
\label{ekstra}
\end{theorem}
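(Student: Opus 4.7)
The plan is to mirror the three-step structure of the proof of Theorem~\ref{ana} (Lemma~\ref{bir}, Corollary~\ref{iki}, Lemma~\ref{uc}), noting that the only feature of the cyclic choice $m_k=(k\bmod N)+1$ actually used in that argument is the property we now impose on the general sequence $(\mathbf{n}_i)$: every index in $\{1,\ldots,M\}$ appears infinitely often.

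First I would record the elementary monotonicity: since $\phi$ strictly decreases along each nontrivial $\gamma_k$, the real sequence $\phi(q_k)$ is nonincreasing, so it converges to some $L$, and the accumulation set $\Lambda\subset\Omega$ of $(q_k)$ is a nonempty subset of $\phi^{-1}(L)$. To show that every $q^*\in\Lambda$ is a critical point of $\phi$, I would fix an index $j$ and use the hypothesis that $j$ appears infinitely often in $(\mathbf{n}_i)$ to extract a subsequence $k_\ell\to\infty$ with $j\in\mathbf{n}_{k_\ell}$ and $q_{k_\ell}\to q^*$; since $q_{k_\ell}$ is critical for $\phi|_{R^{\mathbf{n}_{k_\ell}}_{q_{k_\ell-1}}}$, we have $\partial_j\phi(q_{k_\ell})=0$, and passing to the limit gives $\partial_j\phi(q^*)=0$. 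Producing such a subsequence requires that consecutive points can be kept arbitrarily close, i.e.\ $|q_{k+1}-q_k|\to 0$; this I would deduce from the within-slice \L ojasiewicz length bound applied to each $\gamma_k$ together with $\phi(q_{k-1})-\phi(q_k)\to 0$.

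The core step is upgrading the existence of a critical accumulation point to the convergence $q_k\to q^*$. I would fix a \L ojasiewicz triple $(U,c,\mu)$ at $q^*$ for $\phi$ itself and try to run the standard telescoping computation on each $\gamma_k$ lying inside $U$. The main obstacle is that the flow along $\gamma_k$ is driven by the projected gradient $\nabla(\phi|_{R^{\mathbf{n}_k}_{q_{k-1}}})$, whose norm may be genuinely smaller than $|\nabla\phi|$ and need not obey the \L ojasiewicz bound at $q^*$ in any obvious way. The crux is therefore to produce a uniform slice-wise companion of \eqref{oja}: constants $c'>0$, $\mu'\in[0,1)$ and a neighborhood $U'\ni q^*$ such that
\begin{equation*}
|\nabla(\phi|_{R^{\mathbf{n}}_a})(x)|\;\geq\;c'\,|\phi(x)-L|^{\mu'}
\end{equation*}
for every $x\in R^{\mathbf{n}}_a\cap U'$ on a trajectory of the projected gradient terminating in $U'$, uniformly in $\mathbf{n}\subset\{1,\ldots,M\}$ and in $a$. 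I would attempt this via a parametric \L ojasiewicz argument, treating the fixed coordinates $y_t$ ($t\notin\mathbf{n}$) as analytic parameters and exploiting the finiteness of the set of index subsets $\mathbf{n}$ so that a worst-case choice of constants suffices. Once such a bound is in hand, the familiar computation
\begin{equation*}
\mathrm{length}(\gamma_k)\;\leq\;\tfrac{1}{c'(1-\mu')}\bigl[(\phi(q_{k-1})-L)^{1-\mu'}-(\phi(q_k)-L)^{1-\mu'}\bigr]
\end{equation*}
telescopes over the $k$ with $\gamma_k\subset U'$, so $\sum_k\mathrm{length}(\gamma_k)<\infty$ and hence $q_k\to q^*$.

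For the second claim I expect the construction of Lemma~\ref{uc} to carry over verbatim: that perturbation is local and Morse-theoretic, arranging that every saddle's stable set inside each possible slice has positive codimension, and it is insensitive to whether the slice-type sequence is cyclic or merely visits each coordinate infinitely often. The uniform slice-wise \L ojasiewicz inequality is the genuinely new technical input required by the extra freedom in $(\mathbf{n}_i)$; every other ingredient is either a direct transplant from the proof of Theorem~\ref{ana} or a routine topological observation.
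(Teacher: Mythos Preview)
Your central technical input---the ``uniform slice-wise \L ojasiewicz inequality''
\[
\bigl|\nabla(\phi|_{R^{\mathbf{n}}_a})(x)\bigr|\;\geq\;c'\,|\phi(x)-L|^{\mu'}
\]
on trajectories terminating in $U'$---is false as stated, and this breaks the telescoping length bound you derive from it. The point is that each trajectory $\gamma_k$ ends at $q_k$, where the projected gradient $\nabla(\phi|_{R^{\mathbf{n}_k}_{q_{k-1}}})$ vanishes by construction, while $\phi(q_k)-L>0$ whenever the process has not yet reached $q^*$. Taking $x=q_k$ in your inequality gives $0\geq c'|\phi(q_k)-L|^{\mu'}>0$. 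No choice of $c',\mu',U'$ repairs this; the obstruction is structural, not a matter of shrinking constants. Consequently the displayed length estimate for $\gamma_k$ in terms of $(\phi(q_{k-1})-L)^{1-\mu'}-(\phi(q_k)-L)^{1-\mu'}$ cannot be obtained this way. A correct bound of that shape (compare Proposition~\ref{uzunluk}) needs a genuinely two-scale argument: near each endpoint $q_i$ one uses a \emph{local} slice-\L ojasiewicz triple centered at $q_i$ (with the shifted level $\phi(q_i)$, not $L$) together with the angle condition~(\ref{del}), and only away from $q_i$ does the ambient inequality~(\ref{oja}) do the work.

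The paper avoids this difficulty altogether. Its route to Lemma~\ref{bir}/Corollary~\ref{iki} is a contradiction argument: assuming some type is absent from a ball $U$ around the accumulation point $q$, one passes to the \emph{limit slice} $R^{\mathbf{n}}_q$, observes that the limit set $\gamma$ of the escaping arcs consists of critical points of $\phi|_{R^{\mathbf{n}}_q}$ at level $\phi(q)$, and applies \L ojasiewicz once, on that fixed slice, around the compact set $\gamma$; a continuity argument then transfers the bound to nearby slices and forces the escaping arcs to be short, contradicting the fact that they cross a region of diameter $\sim\diam(U)/2$. Under the contradiction hypothesis the troublesome endpoints $q_{k_i}$ lie \emph{outside} $U$, so the vanishing of the projected gradient there never enters. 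Your preliminary step---deducing $|q_{k+1}-q_k|\to 0$ from a ``within-slice \L ojasiewicz length bound''---has the same defect in miniature: the slice constants $(c_k,\mu_k)$ depend on the moving endpoint $q_k$ and you give no reason they are uniform; and even granting $|q_{k+1}-q_k|\to 0$, extracting a $j$-typed subsequence converging to $q^*$ still requires control on the gaps between successive occurrences of $j$ in $(\mathbf{n}_i)$, which the hypothesis does not provide. Your treatment of the perturbation claim (Lemma~\ref{uc}) is fine.
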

The proof of Theorem~\ref{ekstra} follows closely that of Theorem~\ref{ana}, both of which we will present below.

\section{Proof of the theorems}
\label{pruf}
Before stating and proving the claims, we will need several definitions and observations. 

First, for the sake of simplicity of the notation, let us restrict ourselves (except in Lemma~\ref{uc})  to the case $d=2$. ($d=1$ oversimplifies the problem \cite{bozkodrob2001}.) Whatever we present and claim below applies similarly for other dimensions as well.  Now, let $\Gamma_j \subset \Omega$ denote the  set of points at which the vector field $G$ has the $j$-th component zero; i.e. 
$$\Gamma_{j} = \{x\in\Omega \, | \, \phi_j (x) = 0\}$$
where {$\phi_j (x) = \frac{\partial \phi}{\partial y_j}(x) $}.
Observe that every critical point of $\phi$ lies in $\Gamma_{j}$ for each $j=1,\ldots,2N$.  Let $z$ be a nondegenerate critical point {of $\phi$}. Since the Hessian of $\phi$ at $z$ is  nondegenerate by assumption, $\nabla \phi_j(z)$ is nonzero and $\nabla \phi_j(z)$ and $\nabla \phi_i(z)$ are linearly independent for every pair of distinct $j$ and $i$. Hence we conclude that  around $z$ each $\Gamma_j$ and $\Gamma_{j,i} := \Gamma_j\cap \Gamma_i$ are smooth  real analytic submanifolds
  of $\Omega$ of codim 1 and 2 respectively. Furthermore, again thanks to the nondegeneracy assumption, the tangent spaces of $\Gamma_j$'s at $z$ constitute a central hyperplane arrangement of coordinate planes.
Therefore, the intersection of any collection consisting of $n$ of $\Gamma_j$'s is a smooth real analytic submanifold around $z$ as well, of codim $n$. 

Now let us investigate the process more closely in case $\phi$ is a real analytic Morse function. The point $q_k$ of type $m_k$ which the process converges to at $k$-th step lies in the codim-2 (in general codim-$d$) submanifold $\Gamma_{2m_k-1,2m_k}$; in other words, $k$-th step converges to a point in $\Gamma_{2m_k-1,2m_k} \cap R^{m_k}_{q_k}$. At that point $G$ is perpendicular to the slice. We may view the process as a chase within the subspaces $\Gamma_m$'s, which are submanifolds of codim~2 near the critical points. So, for example, in a neighborhood of some $q\in\Omega$ if $\Gamma_m\cap \phi^{-1}(\R_{\geq \phi(q)})$ is empty for some $m$ then the process cannot converge to $q$. 

Now we are ready for the claims and proofs. In the first two statements, we assume that $\phi$ is a real analytic function, not necessarily Morse. Observe that the process is still well-defined in the absence of being Morse.

\begin{lemma}
Let $\phi:\R^{2N}\ra\R$ be a real analytic function. Consider the sequence $(q_k)_{k=0}^{\infty}$ of stationary points of the process described above. If $q\in\R^{2N}$ is an  accumulation point of  the sequence $(q_k)$ then it is a critical point of $\phi$.
\label{bir}
\end{lemma}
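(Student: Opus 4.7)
The plan is to show that the accumulation point $q$ must be critical by first establishing that the successive displacements $|q_{k+1}-q_k|$ tend to zero, and then using the cyclic structure of $(m_k)$ to read off that every coordinate block of $\nabla\phi(q)$ vanishes.

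First I will record that $\phi(q_k)$ is non-increasing along the process (each $\gamma_k$ is an integral curve of $-\nabla(\phi|_{R^{m_k}_{q_{k-1}}})$), and since $\Omega$ is compact $\phi(q_k)$ is bounded below, so $\phi(q_k)\to L$ and the per-step decreases $\Delta_k:=\phi(q_{k-1})-\phi(q_k)$ satisfy $\Delta_k\to 0$; by continuity $\phi(q)=L$. The heart of the argument will then be to apply the \L ojasiewicz finite-length estimate to the real analytic gradient flow $\gamma_{k+1}$ of $\phi|_{R^{m_{k+1}}_{q_k}}$: on a \L ojasiewicz neighborhood $(U,c,\mu)$ of its endpoint $q_{k+1}$ one obtains
\[
|q_{k+1}-q_k|\ \le\ \mathrm{length}(\gamma_{k+1})\ \le\ \frac{\Delta_{k+1}^{1-\mu}}{c(1-\mu)}.
\]
By a finite covering of $\Omega$ by \L ojasiewicz neighborhoods together with the real analytic dependence of the restriction $\phi|_{R^m_a}$ on the base point $a$, the constants $c>0$ and $\mu\in[0,1)$ can be chosen uniformly in the pair $(m,a)$, and hence $|q_{k+1}-q_k|\to 0$.

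Next, given any subsequence $q_{k_i}\to q$, I will pass to a further subsequence along which $k_i\bmod N$ is constant, so $m_{k_i+j}$ depends only on $j$ and, as $j$ ranges over $\{1,\ldots,N\}$, exhausts $\{1,\ldots,N\}$. For each fixed such $j$ the triangle inequality and the previous step give $|q_{k_i+j}-q_{k_i}|\le\sum_{\ell=0}^{j-1}|q_{k_i+\ell+1}-q_{k_i+\ell}|\to 0$, so $q_{k_i+j}\to q$. Since $q_{k_i+j}$ is the $\omega$-limit of the integral curve of $-\pr_{m_{k_i+j}}(\nabla\phi)$ on its slice, $\pr_{m_{k_i+j}}(\nabla\phi(q_{k_i+j}))=0$; letting $i\to\infty$ and using continuity of $\nabla\phi$ yields $\pr_{m_{k_i+j}}(\nabla\phi(q))=0$ for every $j$, hence $\nabla\phi(q)=0$.

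The hard part will be justifying the uniformity of the \L ojasiewicz constants across the real analytic family of slice restrictions in the first step; without it the telescoped bound collapses and the second step fails. This uniformity is a standard but slightly technical consequence of the real analyticity of $\phi$ and compactness of $\Omega$, and is the only delicate ingredient of the argument.
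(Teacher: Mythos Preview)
Your strategy differs from the paper's and, as you yourself flag, hinges entirely on the uniform \L ojasiewicz estimate for the family of slice restrictions. That uniformity is \emph{not} a routine consequence of compactness and real analyticity, and in fact fails for the class of functions the lemma covers (the lemma is stated for an arbitrary real analytic $\phi$, not necessarily Morse). Take $\phi(x,y)=x^{2}y$ and the slice $\{y=a\}$, $a>0$: the restriction $x\mapsto ax^{2}$ has its unique critical point at $x=0$, the gradient trajectory from $(x_{0},a)$ to $(0,a)$ has length $|x_{0}|$, and the drop is $\Delta=ax_{0}^{2}$. Hence $\mathrm{length}=\Delta^{1/2}/\sqrt{a}$, and no bound $\mathrm{length}\le C\,\Delta^{1-\mu}$ with $C,\mu$ independent of $a$ survives as $a\to 0$. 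The ``finite covering'' you invoke does not help: \L ojasiewicz neighborhoods are attached to critical points of the \emph{slice} restrictions, and these form an uncountable family varying with the base point $a$; even when the exponent $\mu$ can be taken uniform, the constant $c$ may degenerate exactly as above. So the step ``hence $|q_{k+1}-q_{k}|\to 0$'' is unjustified, and your second step collapses with it.

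The paper avoids this by never attempting a global bound on $|q_{k+1}-q_{k}|$. It localizes at the accumulation point $q$: assuming some type (say type~$1$) is absent from a ball $U$ about $q$, the type-$1$ arcs $\gamma_{k_i}$ starting inside $U$ must exit $U$, so their lengths are bounded below by a fixed fraction of $\diam(U)$. But those arcs accumulate on a compact set $\gamma\subset R^{1}_{q}$ of critical points of the \emph{single} restriction $\phi|_{R^{1}_{q}}$ at the \emph{single} level $\phi(q)$; a \L ojasiewicz neighborhood of that fixed set (extended by continuity to nearby slices) forces the arc lengths to tend to $0$, a contradiction. The essential difference is that the paper applies the \L ojasiewicz inequality only near one fixed slice and one fixed critical value, where uniformity is a genuine local matter, whereas your argument requires it across all slices and all endpoint values simultaneously.
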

\begin{proof}
Suppose that  $(q_k)$ is not ultimately constant so that every neighborhood of $q$ contains infinitely many distinct $q_k$'s. Either every neighborhood of $q$ contains infinitely many $q_k$'s of type $m$ for every $m=1,\ldots,N$ or not. In the former case by the continuity of $G$, we must have $G(q)=0$ which proves the lemma. Now we prove by contradiction that the latter case is not possible. Indeed, suppose  there is an open ball $U$ centered at $q$ which does not contain any point of the sequence of, say, type 1 and contains infinitely many points $q_{k_i-1}$ of type $N$. 

We assumed that each curve $\gamma_{k_i}$, starting from the point $q_{k_i-1}$, leaves $U$ and converges to the point $q_{k_i}\not\in \bar{U}$. For any given $\epsilon>0$, there is a sufficiently large $K$ such that if $i>K$ we have 
\begin{equation}
\epsilon>\phi(q_{k_i-1})-\phi(q_{k_i})>\int_{\gamma_{k_i}\cap U}\!\!\! |G^1_{k_i}|  > \frac{1}{2}\diam(U)\cdot \mbox{avg}(|G^1_{k_i}|)
\label{sifir}
\end{equation}
where avg$(|G^1_{k_i}|)$ is the average of $|G^1_{k_i}|$ over $\gamma_{k_i}\cap U$. 
Let $A=\bigcup_i \gamma_{k_i}$ and  $\gamma = (\mbox{closure}(A)-A)\cap U$. The set $\gamma$ can be considered as the set of limit points in $U$ of all point sequences $(p_l)$ with $p_l\in \gamma_{k_l}$,  $(l)$ being any increasing sequence of positive integers. Observe that $\gamma$ lies in the intersection of the  type-1 slice $R^1_q$ and the $\phi(q)$-level set. Moreover it follows from (\ref{sifir}) that  $G^1_q$ is zero on $\gamma$, i.e. all points of $\gamma$ are critical points of $G^1_{q}$. (Loosely speaking $\gamma$ is the {\it limit} of the arcs $\gamma_{k_i}\cap \bar{U}$ however we do not claim and do not need that $\gamma$ is a real analytic arc.)

Now since $\bar{\gamma}\subset \bar{U}$ is compact and all its points are critical points of $G^1_{q}$ and  $\phi(\bar{\gamma})=\phi(q)$, then one can choose a \L ojasiewicz neighborhood $W\subset R^1_q \cap U$ of $\gamma$  such that, assuming  $\phi(q)=0$, there are $c>0$ and $\mu\in[0,1)$ satisfying
$$|G^1_q(x)|> \frac{c}{2}|\phi(x)|^{\mu},$$
for all $x\in W$. (The triple $(W,c,\mu)$ is found as follows: first choose a \L ojasiewicz neighborhood $(W_x,c_x,\mu_x)$ of every point $x\in\gamma$, then take a finite subcollection $(W_l,c_l,\mu_l)$ covering $\bar{\gamma}$ and set $W=\cup W_l\cap U$, $c=\disp \min_l c_l$ and $\disp \mu=\max_l \mu_l$.) By the continuity of $G^1_u$ with respect to point $u$, one can extend $W$ to an open $V\subset U$ such that 
\begin{equation}
|G^1_u(x)|\geq c|\phi(x)|^{\mu},
\label{voja}
\end{equation}  
for all $u\in V$ and $x\in R^1_u$.
The set $V$ can be chosen smaller  (as a neighborhood of ${\gamma}$)  such that $\sup_V \phi <\delta$ where $\delta>0$ is as small as required. Note that
$\diam(U)\geq \diam(V)\geq \diam(\gamma)\geq\diam(U)/2$
and that, say,
\begin{equation}  
\mbox{ length } (\gamma_{k_i} \cap V) \geq \diam(V)/2.
\label{leng}
\end{equation}  
for large $i$'s. On the other hand using (\ref{voja}) we observe, as in the proof of \L ojasiewicz inequality (see e.g. \cite{kurd}), that there is a finite upper bound, for sufficiently large $i$ but independent from  $i$, for the length of $\gamma_{k_i}$ in $V$:
$$ \int_{\gamma_{k_i}(t) \cap V  \neq \emptyset} |\dot{\gamma_{k_i}}| dt
= \int_{0}^{t_{1}} |\dot{\gamma_{k_i}}(t)|dt \leq c_1(\phi(\gamma_{k_i}(t_{1})))^{1-\mu}<c\delta^{1-\mu}.$$ 
where $c_1=(c(1-\mu))^{-1}$.
Here $t_1$ is the moment when $\gamma_{k_i}$ leaves $U$.
This upper bound can be made arbitrarily small (that holds for larger $i$), in particular, much less than $\diam(V)$. This contradicts with (\ref{leng}). Therefore there must be infinitely many points of type 1 in $U$ and the proof by contradiction follows recursively.
\end{proof}

From the above proof, it follows  that sufficiently small neighborhoods of an accumulation point do not let trajectories out. This proves the following statement.
\begin{coro}
Let $\phi:\R^{2N}\ra\R$ be a real analytic function 
and $q$ an  accumulation point of  the sequence $(q_k)$ of stationary points of the process. Then $\lim_{k\ra+\infty} q_k=q$.
\label{iki}
\end{coro}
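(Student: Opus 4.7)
The plan is to exploit the \L ojasiewicz length bound, which already drove the contradiction in Lemma~\ref{bir}, to show that small neighborhoods of an accumulation point are forward-invariant for the process. By Lemma~\ref{bir}, $q$ is a critical point; after subtracting a constant we may assume $\phi(q)=0$. It then suffices to prove that for every $\epsilon>0$ there is an index $K$ with $q_k\in B_\epsilon(q)$ for all $k\geq K$.

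First I would build a \L ojasiewicz-type neighborhood $V\subset B_{\epsilon/3}(q)$ of $q$, together with constants $c>0$ and $\mu\in[0,1)$, such that the projected field satisfies $|G^m_u(x)|\geq c|\phi(x)|^\mu$ whenever $u,x\in V$ with $x$ in the appropriate slice $R^m_u$. This is exactly the construction already carried out in Lemma~\ref{bir}: apply the pointwise \L ojasiewicz inequality at $q$, then use continuity of $G^m_u$ in $u$ to spread the estimate from the single slice $R^1_q$ to nearby slices of every type.

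Next, since $q$ is an accumulation point, I can choose $K$ with $q_K\in V$ and $\phi(q_K)$ so small that $c_1\phi(q_K)^{1-\mu}<\epsilon/3$, where $c_1=(c(1-\mu))^{-1}$. The core step is the standard \L ojasiewicz Lyapunov argument: along any $\gamma_k$ inside $V$, the function $\phi^{1-\mu}$ decreases at a rate dominating $|\dot\gamma_k|$, yielding
\[
\text{length}(\gamma_k)\leq c_1\bigl(\phi(q_{k-1})^{1-\mu}-\phi(q_k)^{1-\mu}\bigr).
\]
Because $\phi$ decreases monotonically across the successive trajectories, these estimates telescope, giving a uniform upper bound of $c_1\phi(q_K)^{1-\mu}<\epsilon/3$ on the total arclength of $\gamma_{K+1}\cup\gamma_{K+2}\cup\cdots$ as long as they remain in $V$. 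Combined with $q_K\in B_{\epsilon/3}(q)$, this keeps every subsequent $q_k$ inside $B_{2\epsilon/3}(q)\subset B_\epsilon(q)$, so $q_k\to q$.

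The one delicate point is the familiar bootstrap circularity: the length bound presupposes that the trajectories stay in the \L ojasiewicz neighborhood, while that confinement is exactly what the bound is being used to prove. I would handle this by inducting against the first exit time $t^\ast$ from $B_\epsilon(q)$ on each $\gamma_{k}$: the telescoped bound is valid on $[0,t^\ast)$ and its strict inequality forces $t^\ast=\infty$, so no trajectory can escape. Beyond that, the argument is essentially a repackaging of the ingredients already assembled for Lemma~\ref{bir}, which is what the authors mean by saying the corollary ``follows from the above proof.''
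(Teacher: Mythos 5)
Your overall strategy --- use the \L ojasiewicz length estimate behind Lemma~\ref{bir} to show that small neighbourhoods of $q$ trap the tail of the process, then bootstrap with a first-exit argument --- is the same one the paper invokes in its one-line proof, and considering the \emph{total} length over many steps is the right shape (single-step confinement alone would not exclude a slow drift out of the neighbourhood over many short steps). The gap is in the estimate you build everything on. The uniform inequality $|G^m_u(x)|\geq c|\phi(x)|^{\mu}$ for all types $m$ and all $u,x$ near $q$ is false: already for $\phi=y_1^2+y_2^2+y_3^2+y_4^2$ near its minimum, at points $x=(0,0,y_3,y_4)$ with $(y_3,y_4)\neq 0$ one has $G^1(x)=0$ while $\phi(x)>0$. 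More to the point, the same failure occurs along the process itself: each trajectory $\gamma_k$ ends at $q_k\in\Gamma_{2m_k-1,2m_k}$ with, in general, $\phi(q_k)>\phi(q)=0$, so on the final portion of $\gamma_k$ the projected field $G^{m_k}$ tends to $0$ while $|\phi|^{\mu}$ stays bounded away from $0$. Hence the per-step bound $\mathrm{length}(\gamma_k)\leq c_1\bigl(\phi(q_{k-1})^{1-\mu}-\phi(q_k)^{1-\mu}\bigr)$, which is exactly what makes your sum telescope, does not follow from the \L ojasiewicz inequality at $q$; what the restricted-field inequality near $q_k$ gives is a bound of the form $c_k\bigl(\phi-\phi(q_k)\bigr)^{1-\mu_k}$, and sums of such ``power of the increment'' terms over infinitely many small steps need not be small even when the total decrease of $\phi$ is.

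This is precisely the difficulty the paper isolates later in Proposition~\ref{uzunluk}: each $\gamma_i$ is split at its last entry into a small slice-ball $B_i$ around $q_i$; outside $B_i$ the full-gradient inequality (\ref{oja}) at $q$ is combined with the angle condition (\ref{del}) (which holds only away from $q_i$, where $\nabla\phi$ is not yet nearly perpendicular to the slice), while inside $B_i$ a separate \L ojasiewicz inequality (\ref{ufak}) for $G^{m_i}$ relative to the level $\phi(q_i)$ is used, and the resulting non-telescoping tail contributions are made summable by the choice (\ref{maxC}). To repair your argument you need this two-regime estimate (or a genuinely uniform, parametrized \L ojasiewicz statement, which is not available in the naive form you assert); as written, the central inequality and the telescoped length bound are unjustified, so the confinement step --- and with it the corollary --- is not yet proved.
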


To finish the proof of the main theorem, we need the following
\begin{lemma}
Let $\phi:\tilde{\Omega}\ra\R$ be a primal navigation function on $\Omega\subset\tilde{\Omega}$. Then 
except when the initial point is  a saddle point,
convergence to a saddle point is impossible for a $C^{\omega}$-dense subset of primal navigation functions.
\label{uc}
\end{lemma}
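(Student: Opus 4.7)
The plan is to produce, arbitrarily $C^\omega$-close to any given primal navigation function $\phi$, a primal navigation function $\psi$ for which no non-saddle initial point leads to convergence to a saddle. The strategy is to analyze the local return map of the process near each saddle of $\psi$, perturb $\psi$ so that this return map is expanding enough that its only local fixed point is the saddle itself, and then bootstrap the resulting linear estimate to the nonlinear setting via the \L ojasiewicz inequality, reusing the same type of length estimate that drives the proof of Lemma~\ref{bir}.

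First I would study the local dynamics near a nondegenerate saddle $z$ of $\psi$. Corollary~\ref{iki} implies that any process converging to $z$ eventually lies in any prescribed small neighborhood of $z$, so the asymptotic behavior is captured by linear data at $z$. Writing $\mathrm{Hess}(\psi)(z)$ in block form corresponding to the $N$ groups of $d$ coordinates, one full cycle of the process (one step of each type $m=1,\dots,N$) induces a real analytic return map on the ``just-fixed'' coordinates whose linearization $M_z$ is built from the $d\times d$ Hessian blocks. Local convergence to $z$ from a direction $v\in T_z\Omega$ requires $v$ to lie in the nonlinear stable set of this return map, whose tangent space at $z$ is the stable subspace of $M_z$.

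Next I would use that the primal navigation conditions (i), (ii)$'$, (iii) are $C^2$-open, together with the fact that the saddles of $\phi$ form a finite set, to add an arbitrarily $C^\omega$-small polynomial perturbation to $\phi$ whose independently prescribed Taylor coefficients at each saddle provide enough flexibility to tune the Hessian blocks, and hence the spectrum of $M_z$, at each saddle. Whenever the Morse index of $z$ allows it, one perturbs so that every eigenvalue of $M_z$ has modulus strictly greater than $1$; the \L ojasiewicz-type estimate of Lemma~\ref{bir}, applied successively within each slice, then propagates this linear expansion to the nonlinear return map and forces its stable set to be $\{z\}$. Otherwise, one perturbs so that the linear stable subspace of $M_z$ sits in generic position with respect to the coordinate slices, and a further real analytic genericity argument shows that the nonlinear stable set of the process at $z$ still reduces to $\{z\}$ within a $C^\omega$-dense family of perturbations. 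Throughout, one checks that the perturbations preserve the unique minimum at $p$ and the Morse property on $\Omega$.

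The main obstacle will be the second case above: saddles whose Morse index is too low to permit tuning $M_z$ to have purely expanding spectrum (already visible from the fact that if, for $d=N=2$, all diagonal Hessian blocks are positive definite, then indefiniteness of $\mathrm{Hess}(\psi)(z)$ forces at least one eigenvalue of $M_z$ to be smaller than $1$). Showing that the nonlinear stable set of the return map coincides with $\{z\}$ in this obstructed case requires a genuinely analytic argument, using the rigidity of real analytic functions and the \L ojasiewicz inequality to rule out any nontrivial analytic stable set persisting under generic $C^\omega$-perturbation, rather than the soft transversality reasoning that would suffice in the $C^\infty$ setting.
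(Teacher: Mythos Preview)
Your approach is genuinely different from the paper's, and it has a real gap that is not merely a matter of filling in details.

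The paper argues geometrically: in a Morse chart at a saddle $q$, convergence of the process to $q$ forces the tail of $(q_k)$ to lie on the local stable manifold $U_0$, and hence each slice-step must produce an integral curve of the projected field that stays inside $U_0\cap S$. The paper then builds an explicit global real analytic perturbation, $\psi=\phi\circ h$ with $h(x)=x+bx\,\dfrac{\sin ar^2}{r^2}$ centered at a generically chosen interior point, and checks that for generic parameters this destroys the closed condition ``$R^m_q\cap U_q$ contains an integral curve of $G^m_q$'' at every saddle while preserving (i), (ii)$'$, (iii) and the minimum at $p$. No return map or spectral analysis is used.

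Your proposal instead linearizes one full cycle of the process at $z$ to obtain a block Gauss--Seidel map $M_z$ and tries to force $M_z$ to be expanding. The gap is twofold. First, $M_z$ as you describe it is the map that sends a nearby point to the \emph{critical point} of $\phi$ on each successive slice; but the actual process follows the negative projected gradient, which runs \emph{away} from that critical point whenever the diagonal block $H_{mm}$ is not positive definite. So the linearized cycle you write down is only the right model in the case you yourself call obstructed (all $H_{mm}\succ 0$), and in the complementary case your spectral condition on $M_z$ has no dynamical meaning. Second, and more seriously, even when $M_z$ is the right linearization, your analysis misses finite-step convergence. Concretely, take $d=1$, $N=2$ and any saddle with $H_{11}>0$: the type-$1$ slice through $z$ has $z$ as a local minimum of the restriction, so from every nearby point $(x_0,y_z)$ on that slice the very first step lands exactly at $z$. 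This one-step basin is a $d$-dimensional set of non-saddle initial points converging to $z$, it persists under all small perturbations that keep $H_{11}>0$ (an open condition), and it is invisible to the asymptotics of $M_z$. Your ``genericity of the linear stable subspace relative to the slices'' cannot remove it, because it is not produced by iterating $M_z$ at all. This is precisely the phenomenon the paper handles by the slice/stable-manifold incidence condition and then kills with the global diffeomorphism $h$, which actually rotates the coordinate directions relative to the Hessian; your local polynomial perturbations of the Taylor coefficients, being $C^\omega$-small, cannot change the sign of a diagonal block and hence cannot remove this basin.

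A secondary issue is the perturbation itself: in the real analytic category there is no compactly supported bump, so ``independently prescribed Taylor coefficients at each saddle'' via a small polynomial will interact with the boundary condition (ii)$'$ and with the location of $p$. The paper's single global diffeomorphism, with explicitly controlled parameters $a,b$ and a carefully chosen center $o$, is doing real work here that your outline does not address.
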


\begin{proof}
Let $q=0$, $\phi(q)=0$, $s$ be the index of  $\phi$ at $q$ and $(U,f)$ be a real analytic Morse neighborhood of $q$,
i.e. $f:(\R^{dN},0)\ra(\R^{dN},0)$ such that  $g\doteq \phi\circ f^{-1}:(U,0)\ra (\R,0), (y_1,\ldots,y_{dN})\mapsto -y_1^2-\ldots-y_s^2+y_{s+1}^2+\ldots+y_{dN}^2$.

Now in this new setting,  $-\nabla g$ has the first $s$ components nonnegative. Therefore the first $s$ coordinates of the position cannot converge to 0 unless they are already 0. A necessary condition for that
is when $q_{k}\in U_0=U\cap (\{0\}\times \R^{dN-s})$ for some $k>0$. Furthermore in such a case, convergence is possible only when 
there are slices which contain integral curves in their intersection with $U_0$;
more precisely, for the slice $S$ with the projected vector  field $G_S$,  $U_0\cap S$ contains some integral curves of $G_S$. We will argue that such a case does not occur for a $C^{\omega}$-dense subset of primal navigation functions. 

First we perturb $\phi$ to a  primal navigation function $\psi$ with the same minimum point $p$ such that 
the local stable manifold $U_q$ of each critical point $q$ of $\psi$ and the slices $R^m_x$ (for every $m$ and $x\in U_q$ sufficiently close to $q$)  intersect transversely.
It is straightforward to see that a 
sufficiently small generic real analytic coordinate change in a small neighborhood of $q$ achieves this locally.
For example, a small generic rotation centered at the critical point would work. 
However we need further that this real analytic perturbation can be done globally, fixing  $p$.
We propose the following construction in case $\Omega$ is the closed unit ball. The construction
in the general case will follow since   $\Omega$ is analytically diffeomorphic to the closed unit ball.
Let $o$ be a point in the interior of $\Omega$ such that for each critical point $q$ of $\phi$
the line $oq$ is transverse and non-orthogonal  to $\bigcup_{1\leq m\leq N} R^m_q \cap U_q$,
i.e.  each intersection $R^m_q \cap U_q$ is non-tangent to $oq$ 
and has a radial (i.e. $oq$) component. 
Since these conditions are open, the set of such $o$'s is open and 
dense in $\Omega$. Below we take $o$ as the origin in the unit closed ball $\Omega$.
The construction below can be made for the general case  similarly.

Now we consider the real analytic diffeomorphism $h:\R^{dN}\ra\R^{dN}$, 
$h(x)=x+bx\frac{\sin ar^2}{r^2}$
where $r$ is the distance $|ox|$ and $a,b\in\R$. We require
$a$ satisfy $|op|^2=2\pi k/a$ for some $k\in\Z^+$.
The radial perturbance $h$ fixes $p$ and moves the other critical points  slightly, the extent of which can be controlled by $b$. 
One can choose $a$ such that the radial perturbance of each slice at each critical point is nontrivial, which is required for transversality.
We also choose the domain $\tilde{\Omega}$ of  $\phi$
so that the condition (ii)$'$ is satisfied in $\tilde{\Omega}-V$ where $V$ is a 
compact subset of $\Omega$. 
By making $b$ small, one can have $h$ arbitrarily $C^{\omega}$-close to id and $h(\tilde{\Omega})\supset\Omega$.
These choices guarantee that the function $\psi=\phi\circ h:h^{-1}(\Omega)\ra\R$ is defined
and is a real analytic Morse function with the same minimum as $\phi$; its critical points are
in one to one correspondence with, arbitrarily close to and of the same index  as those of $\phi$.
Moreover  the condition (ii)$'$ is satisfied  by $\psi$ on $\partial \Omega$ provided that $h$ is sufficiently $C^{\omega}$-close to id.
Thus we obtain a  primal navigation function satisfying the required transversality conditions.

Finally let us note that since the condition that $R^m_q\cap U_q$ contains integral curves 
of $G^m_{q}$ is a closed condition, the construction above gives the perturbation required
 for almost all $o$, infinitely many $a$ and sufficiently small $b$.
\end{proof}


\subsection{Proof of Theorem~\ref{ana}} Since $\Omega$ is compact, the sequence $(q_k)$ has an accumulation point, which is a critical point of $\phi$ by Corollary~\ref{iki}. The proof follows from Lemma~\ref{uc} except the last sentence of the theorem. We assert the last sentence by giving explicit examples for each case in Section~\ref{ornek} below. \hfill $\Box$

\subsection{Proof of Theorem~\ref{ekstra}}
The proof will be immediate after modifying the definition of the {\em type of a point}: we say that $q_k$ has type $j$ if  $G(q_k)$ has the $j$-th component 0. Note that $q_k$ has type $j$ only if the previous slice of the process has type $\mathbf{n}_k$ such that $j\in \mathbf{n}_k$. 

Now we start as in Lemma~\ref{bir}. Suppose  that every neighborhood of $q$ contains infinitely many distinct sequence points. Either every neighborhood of $q$ contains infinitely many $q_k$'s of type $j$ for every $j=1,\ldots,M$ or not. In the former case by the continuity of $G$, we must have $G(q)=0$ so that we reach the conclusion of Lemma~\ref{bir}. In the latter case suppose  there is an open ball $U$ centered at $q$ which  contains a subsequence  $(q_{k_i-1})$ of points of type, say, 1. 
Suppose infinitely many $\mathbf{n}_{k_i}$'s contain the index, say, 2. Denote the corresponding  point subsequence by   $(q_{l_i-1})$. Then the main body of Lemma~\ref{bir} proves that all type~2 sequence points $q_{l_i}$ are contained  in $U$. The sets $\mathbf{n}_{k_i}$ and $\mathbf{n}_{l_i+1}$ corresponding to the type~1 points $q_{k_i-1}$ and type~2 points $q_{l_i}$ respectively, either contain a different index than 1 and 2 infinitely many times, or ultimately the  indices that appear are nothing but 1 or 2. In the latter case the subsequence $(q_{l_i-1},q_{l_i})$ constitute a tail of $(q_k)$ so that this latter case presents a contradiction to the assumption of the theorem that all indices appear infinitely many times. Hence we are left with the former case and the claim of Lemma~\ref{bir} follows recursively as before. 

Proof of Lemma~\ref{uc} works exactly the same. \hfill $\Box$

\section{Further discussion and examples}
\subsection{Convergence analysis}
Considering the proof of Lemma~\ref{bir} above, more can be said about convergence to a critical point.  Through a more explicit convergence analysis one can control the lengths of the trajectories. 
This analysis shall reflect the nature of the process near a critical point.

Let $q$ be a critical point,  $\phi(q)=0$. 
We will bound from above the total length of the trajectories  once the sequence gets sufficiently close to $q$ in $U$. More precisely here is our claim.

\begin{prop}   
Let $(U=B_{r}(q),c,\mu)$ be a \L ojasiewicz neighborhood of $q$.
Suppose that for some $c'>1/(c(1-\mu))$ there is a $q_l\in U$ satisfying 
\begin{equation}
c'\phi^{1-\mu}(q_l)<r
\label{sipraym}
\end{equation}
and that $q_i\in U$ for $l\leq i\leq n$ for some $n>l$.
Then the total length of the trajectories $(\gamma_i)_{l<i\leq n}$ is less than $r$. 
\label{uzunluk}
\end{prop}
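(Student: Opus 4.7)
The plan is to derive a \L ojasiewicz-type length bound along each individual trajectory $\gamma_i$, and then to telescope these estimates over $l < i \leq n$.

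First I would observe that on each slice $S_i = R^{m_i}_{q_{i-1}}$ the trajectory satisfies $\dot\gamma_i = -\nabla(\phi|_{S_i})(\gamma_i)$, so
\begin{equation*}
\frac{d}{dt}\phi(\gamma_i(t)) = -|\dot\gamma_i(t)|^2.
\end{equation*}
Granting, as discussed below, that the \L ojasiewicz inequality extends to the projected gradient in the form $|\nabla(\phi|_{S_i})(x)| \geq c\,\phi(x)^\mu$ for $x \in S_i \cap U$, one computes
\begin{equation*}
-\frac{d}{dt}\phi^{1-\mu}(\gamma_i) = (1-\mu)\,\phi^{-\mu}|\dot\gamma_i|^2 \geq (1-\mu)\,c\,|\dot\gamma_i|,
\end{equation*}
and integration over the trajectory yields $\mbox{length}(\gamma_i) \leq \frac{1}{c(1-\mu)}\bigl[\phi^{1-\mu}(q_{i-1}) - \phi^{1-\mu}(q_i)\bigr]$.

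Summing telescopically,
\begin{equation*}
\sum_{i=l+1}^{n} \mbox{length}(\gamma_i) \leq \frac{1}{c(1-\mu)}\bigl[\phi^{1-\mu}(q_l) - \phi^{1-\mu}(q_n)\bigr] \leq \frac{1}{c(1-\mu)}\,\phi^{1-\mu}(q_l),
\end{equation*}
and the hypotheses $c' > 1/(c(1-\mu))$ together with $c'\phi^{1-\mu}(q_l) < r$ force this sum to be strictly less than $r$, as desired.

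The main obstacle is justifying the \L ojasiewicz inequality for the projected gradient $\nabla(\phi|_{S_i})$ rather than for $\nabla\phi$ itself, since the triple $(U,c,\mu)$ is prescribed for $\phi$ alone and such an inequality does not transfer automatically. I would follow the continuity-in-the-base-point argument of Lemma~\ref{bir} (where the bound $|G^1_q(x)| > (c/2)\phi(x)^\mu$ along the limit arc is propagated to $|G^1_u(x)| \geq c\,\phi(x)^\mu$ on the slices through nearby $u$): after possibly shrinking $U$, such a projected \L ojasiewicz inequality holds on every slice through any point of $U$, with the same $\mu$ and a constant arbitrarily close to $c$; any loss in the constant is absorbed into the strict inequality defining $c'$. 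A secondary technicality---that $\phi$ may change sign in $U$ when $q$ is a saddle---can be handled by splitting each $\gamma_i$ at the zero set of $\phi$ and working with $|\phi|^{1-\mu}$ on each piece; the telescoping and the final comparison with $r$ remain unaffected.
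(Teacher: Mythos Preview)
Your telescoping plan hinges on the inequality
\[
|\nabla(\phi|_{S_i})(x)|\ \geq\ c\,\phi(x)^{\mu}\qquad\text{for }x\in S_i\cap U,
\]
and this is where the argument breaks. Evaluate it at $x=q_i$: by construction $q_i$ is a critical point of $\phi|_{S_i}$, so the left side vanishes, while $\phi(q_i)>0$ whenever $q_i\neq q$. Thus the projected \L ojasiewicz inequality you need is simply false near the endpoint of each $\gamma_i$, and no continuity-in-the-base-point argument can rescue it: in Lemma~\ref{bir} the limit arc consisted of critical points of $G^1_q$ \emph{on the level set $\phi=0$}, so the \L ojasiewicz bound there was centred at the correct critical value; here the critical value of $\phi|_{S_i}$ is $\phi(q_i)\neq 0$, not $0=\phi(q)$.

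The paper's proof is designed precisely to circumvent this difficulty. Each trajectory $\gamma_i$ is split at the last entrance time $t_i$ into a small ball $B_i$ about $q_i$. On the outer piece $[0,t_i]$ one has an \emph{angle condition} $|\dot\gamma_i|\geq\delta|\nabla\phi|$ (the projected gradient is a definite fraction of the full gradient away from $q_i$), which combined with the full \L ojasiewicz inequality~(\ref{oja}) gives the telescoping term $c'\bigl(\phi(q_{i-1})^{1-\mu}-\phi(\gamma_i(t_i))^{1-\mu}\bigr)$. On the inner piece $[t_i,\infty)$ one uses a \emph{local} \L ojasiewicz inequality for the projected field centred at $q_i$, namely $|G^{m_i}_{q_i}(x)|\geq c_i|\phi(x)-\phi(q_i)|^{\mu_i}$, producing terms $c_i'\bigl(\phi(\gamma_i(t_i))^{1-\mu_i}-\phi(q_i)^{1-\mu_i}\bigr)$. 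These inner contributions do not telescope, but since there are only finitely many of them one can shrink the $B_i$'s so that each is bounded by $\Delta/(n-l)$ for an arbitrary $\Delta>0$; letting $\Delta\to 0$ gives $L\leq r$. Your argument collapses the two regimes into one and thereby loses exactly the mechanism that handles the degeneration of the projected gradient at $q_i$.
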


\begin{proof}
We use the idea in the proof of  \cite[Theorem~2.2]{ama}, where the problem considered 
involves  a fixed vector field, and a completely different set-up.
Set $\delta=1/(cc'(1-\mu))$ (observe $0<\delta<1$).
For each $l< i\leq n$ choose $\epsilon_i>0$ such that on the $\epsilon_i$-neighborhood  $B_i=B_{\epsilon_i}(q_i)\cap R^i_{q_{i}}$ on the slice $R^i_{q_{i}}$ (that contains the points $q_{i-1}$ and $q_i$ and the curve $\gamma_i$), the curve $\gamma_i$   satisfies the {\em angle condition}:
\begin{equation} \frac{d}{dt}\phi(\gamma_i (t))=\langle \nabla\phi(\gamma_i (t)), \dot\gamma_i (t)\rangle\leq -\delta|\nabla\phi(\gamma_i (t))|\cdot |\dot\gamma_i (t)|.
\label{del}
\end{equation}
Such $\epsilon_i$'s exist since at point $q_i$, $\nabla\phi(q_i)$ is perpendicular to the slice $R^i_{q_{i}}$.
Now if necessary, we make each  $\epsilon_i$ smaller so that the corresponding $B_i$ becomes a \L ojasiewicz neighborhood for the vector field $G^{m_i}_{q_i}$ too (here $q_i$ is of type $m_i$), i.e. there are $c_i>0$ and $\mu_i\in[0,1)$ such that for all $x\in B_i$, we have 
\begin{equation} |G^{m_i}_{q_i}(x)|\geq c_i|\phi(x)-\phi(q_i)|^{\mu_i}.
\label{ufak}
\end{equation}
Set $c'_i=1/(c_i\delta(1-\mu_i))$.  

Finally let $\Delta>0$ be given. Make $\epsilon_i$'s further smaller, if necessary, so that 
for every $a\in\{\mu,\mu_{l+1},\ldots,\mu_{n}\}$ and for every $x\in\bar{B_i}$:
\begin{equation} |\phi(x)^{1-a}-\phi(q_i)^{1-a}|<\frac{\Delta}{C(n-l)},
\label{maxC}
\end{equation}
where $C=\max\{c'_i \,|\, l<i\leq n\}$. This can be fullfilled since each $\phi^{1-a}$ is continuous.  
 Then 
we have the total length of the trajectories $\gamma_i$, ($l<i\leq n$):
$$ \ba{llll} L &= & \disp \sum_{i=l+1}^n  \int_{\gamma_i} |\dot{x}| &\\

&= &\disp  \sum_{i=l+1}^n  \left(  \int_0^{t_i}  |\dot{x}|    +  \int_{t_i}^{\infty} |\dot{x}|   \right), \mbox{ $t_i$ is when}  & \mbox{$\gamma_i$ enters $B_i$  the last time}\\

&\leq &\disp  \sum_{i=l+1}^n  c'\left(\phi(q_{i-1})^{1-\mu} - \phi(\gamma_i(t_i))^{1-\mu}\right)  & \mbox{using (\ref{oja}), (\ref{del}), (\ref{ufak})}\\
&  & + \disp \sum_{i=l+1}^n  c'_i \left(\phi(\gamma_i(t_i))^{1-\mu_i} - \phi(q_{i})^{1-\mu_i}\right), &  \mbox{ and \cite[Equation~2.6]{ama}} \\

&\leq & \disp c' \phi(q_l)^{1-\mu} - c'\phi(\gamma_n(t_n))^{1-\mu} & \\
& & + \disp  \sum_{i=l+1}^{n-1}  c' \left(\phi(q_i)^{1-\mu} - \phi(\gamma_i(t_i))^{1-\mu}\right) & \mbox{(these terms are negative)} \\
& & + \disp \sum_{i=l+1}^n  c'_i \left(\phi(\gamma_i(t_i))^{1-\mu_i} - \phi(q_{i})^{1-\mu_i}\right) & \\

& < & c' \phi(q_l)^{1-\mu} & \mbox{other terms are negative} \\
& & +  \disp \sum_{i=l+1}^n  c'_i \frac{\Delta}{C(n-l)}, & (\ref{maxC}) \\

& < & r + \Delta, &  \mbox{(\ref{sipraym}) and $c'_i\leq C$.}
\ea
$$
Since $\Delta$ was arbitrary, we deduce $L\leq r$. 
\end{proof}

We close this section with a consequence of the proposition.
\begin{coro}
With the hypotheses of Proposition~\ref{uzunluk} suppose $q_i\in U$ for all $i\geq l$ (so that $\lim q_i=q$). Then the total length of the trajectories $(\gamma_i)_{i>l}$ is less than $r$. 
\end{coro}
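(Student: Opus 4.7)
The plan is essentially to pass to the limit $n \to \infty$ in Proposition~\ref{uzunluk}, using monotone convergence of partial sums of nonnegative lengths. Set
\[
L_n \;=\; \sum_{i=l+1}^{n} \mathrm{length}(\gamma_i),
\]
so that the total length we wish to bound is $L \;=\; \lim_{n\to\infty} L_n \;=\; \sup_{n>l} L_n$, the limit/supremum existing in $[0,+\infty]$ because $(L_n)$ is nondecreasing.

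Next, I would invoke the hypothesis that $q_i\in U$ for every $i\geq l$, together with the standing assumption \eqref{sipraym} on $q_l$, to verify that the full assumptions of Proposition~\ref{uzunluk} hold for the truncated initial segment of the process up to each $n>l$. Applying the proposition then yields $L_n \leq r$ for every such $n$ (this is what the proof of the proposition actually delivers after letting $\Delta\to 0$). Passing to the supremum in $n$ gives $L\leq r$, which is the desired bound; the assertion that $\lim q_i = q$ is automatic from Corollary~\ref{iki}, since convergence to a critical point inside $U$ combined with boundedness of the trajectory lengths forces the limit to coincide with the already-identified accumulation point $q$.

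I do not expect any obstacle here, since this is a direct limit argument: the proposition was already crafted to give an $n$-independent bound, and the only thing one needs is that adding further pieces to a nondecreasing bounded sequence keeps it bounded by the same constant. The only mild point worth noting is that, although Proposition~\ref{uzunluk} is stated with strict inequality, its proof gives $L_n\leq r$ via the $\Delta\to 0$ argument, so the corollary should likewise be read with the understanding that $L\leq r$ (strict inequality would require, for instance, that $\phi(q_l)^{1-\mu}$ be bounded strictly away from $r/c'$, which is automatic under \eqref{sipraym} anyway).
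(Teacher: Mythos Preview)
Your proposal is correct and is precisely what the paper intends: the corollary is stated without a separate proof, and passing to the limit $n\to\infty$ in Proposition~\ref{uzunluk} via monotonicity of the partial length sums is the evident step. On your closing remark about strictness: the proof of the proposition actually yields the sharper bound $L_n \leq c'\phi(q_l)^{1-\mu}$ for every $n$ (before \eqref{sipraym} is invoked), so $L=\sup_n L_n \leq c'\phi(q_l)^{1-\mu} < r$ and the strict inequality survives the limit without any additional hypothesis.
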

 
\subsection{Examples}
\label{ornek}
\begin{figure}[h]
   \begin{center}
\resizebox{10cm}{!}
{\input{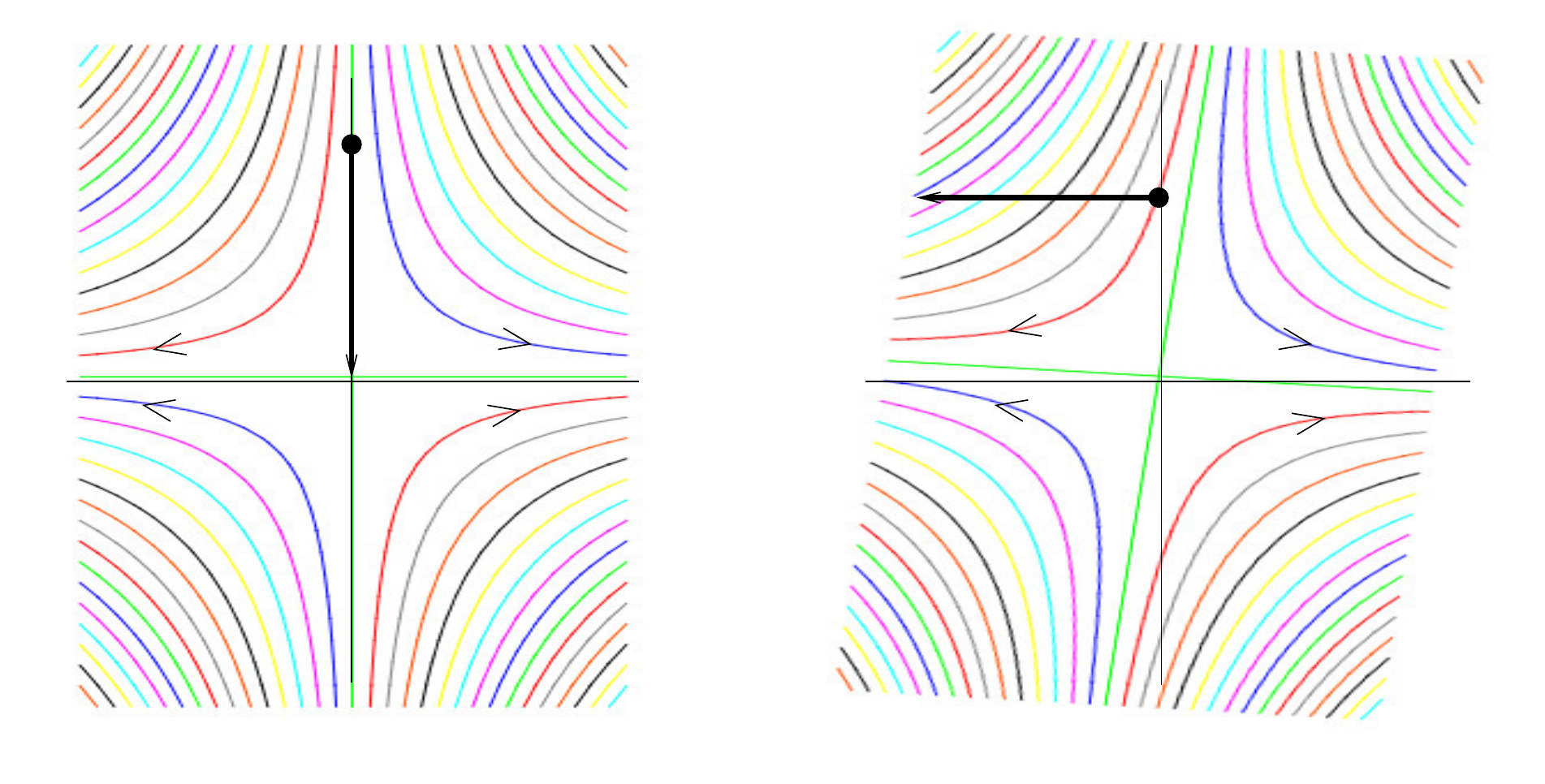_t}}
\caption{Integral curves of a function. (a)The process hits $q$ after finitely many steps; (b) A small perturbation spoils the convergence; by definition the process starts with a type-1 step.}
       \label{ornek1}
      \end{center}
\end{figure}
A case where the process hits a critical point $q$ after a finite number of steps is easy to cook up locally (see Figure~\ref{ornek1}(a)). We have seen that this case is not generic. In fact Figure~\ref{ornek1}(b) illustrates a small generic real analytic local perturbation of (a) where there is no initial $q_0$ for which the process hits 0. 

Similarly, Figure~\ref{sisman} depicts a situation where convergence to a saddle point is provided
by a set of initial points  of positive area. Note that this case occurs only if a slice lies in an stable manifold and the slice contains an integral curve of the projected gradient system.
\begin{figure}[t]
   \begin{center}
\resizebox{7cm}{!}
{\input{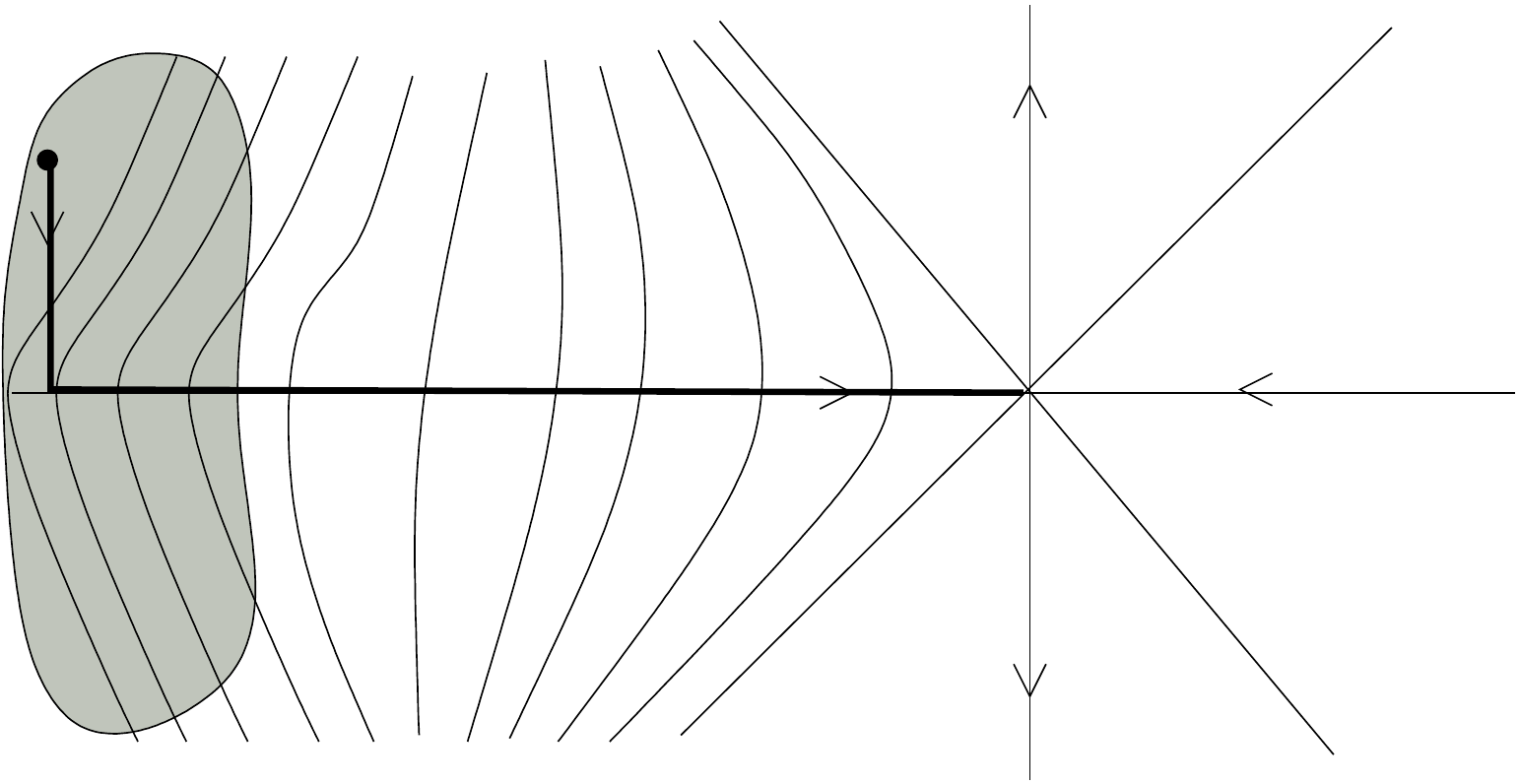_t}}
\caption{Level curves of a function. The initial points in the shaded region start a process converging to the saddle point.}
       \label{sisman}
      \end{center}
\end{figure}

One might ask to herself if infinitely many steps  is ever required for convergence to a critical point.  The answer is affirmative and an example for a minimum point is depicted in Figure~\ref{ornek2}.  

\begin{figure}[t]
   \begin{center}
\resizebox{7cm}{!}
{\input{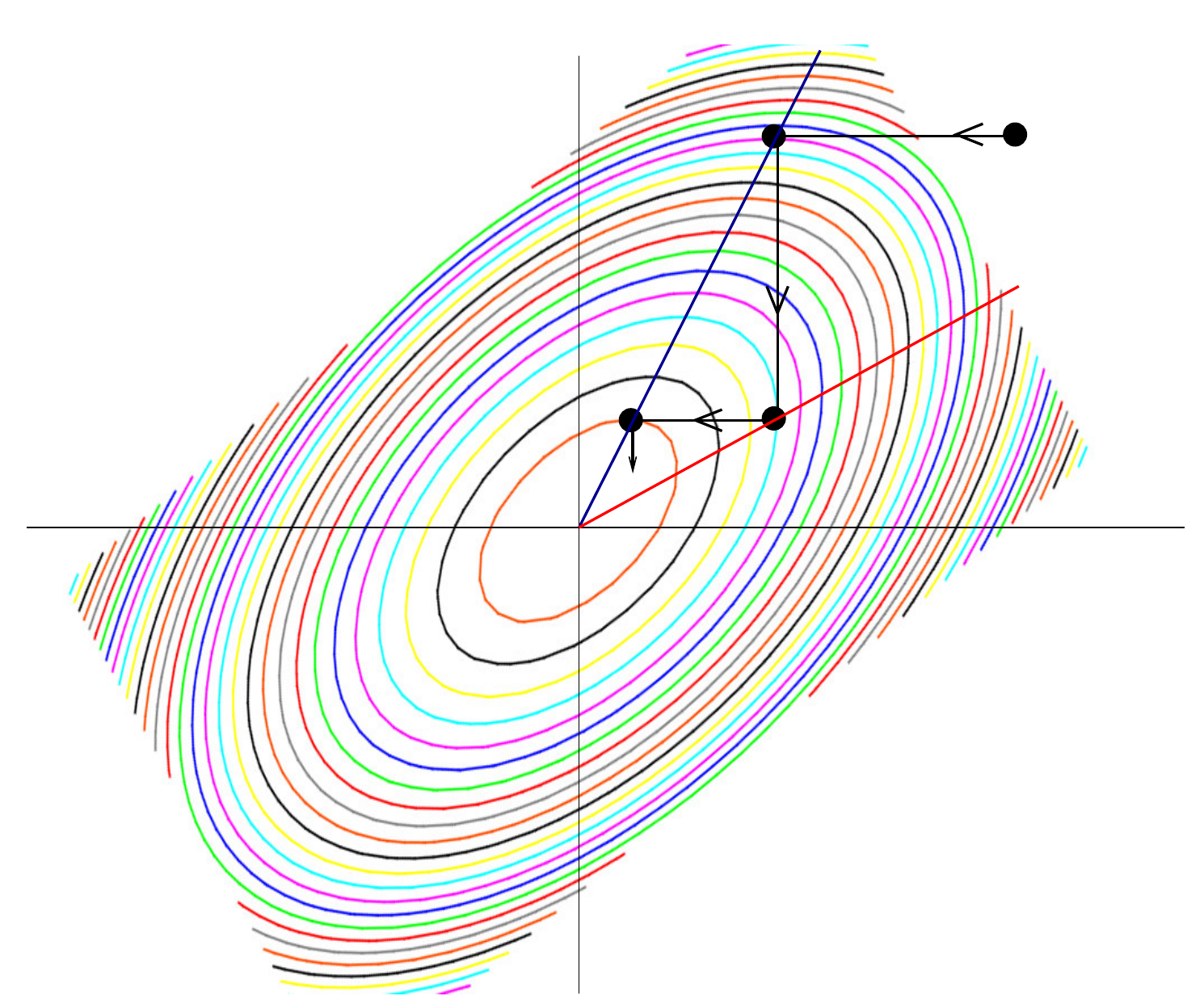_t}}
\caption{$a(x+y)^2+b(x-y)^2=1, a\neq b$. Infinite steps needed for convergence to 0.}
       \label{ornek2} 
      \end{center}
\end{figure}

It is not difficult either to imagine a case where  infinitely many steps  is needed for convergence to a saddle point. Let $d=1$, $N=3$, $s=1$, $b=0$ and $c=2$, and the type-1, -2, -3 slices be lines  parallel to $x$-, $y$- and $z$-axes respectively. We consider the Morse function
$f(x,y,z)=(x-z)^2+2(x+z)^2-2y(3x+z)$ which has a unique critical point at 0. 
Observe that the curve $\beta=\Gamma_1\cap \Gamma_2$ lies in $0\times \R^2$. We start from an initial point $q_0\in (0\times \R^2) - \alpha$, where $\alpha=\Lambda_1\cap (0\times \R^2)$. The type-1 steps  terminate at points on the curve $\alpha$, which are  critical points for the type-2 steps as well.   Thus the process makes zigzags between $\alpha$ and $\beta$ via type-1 and type-3 steps.
The behaviour on  $0\times \R^2$ is exactly the same as in Figure~\ref{ornek2}.

\end{document}